\newcommand{\Br}{\mathbf{\Sigma}}
\newcommand{\bu}{\bullet}
\newcommand{\bZ}{\mathbb{Z}}
\newcommand{\si}{\sigma}
\newcommand{\HFhat}{\widehat{\operatorname{HF}}}
\newcommand{\rk}{\operatorname{rk}}
\newtheorem{theorem}{Theorem}
\newtheorem{corollary}[theorem]{Corollary}
\newtheorem{proposition}[theorem]{Proposition}
\newtheorem{remark}[theorem]{Remark}
\newtheorem{lemma}[theorem]{Lemma}
\title[Genus one open books with non-left-orderable fundamental group]{Genus one open books with non-left-orderable fundamental group}
\date{September 22, 2011}
\author[Yu Li]{Yu Li}
\thanks{Yu Li was supported by a CSST undergraduate summer internship}
\address{School of Mathematics, Nankai University, Tianjin 300071 China}
\email{LiYu173@mail.nankai.edu.cn}
\author[Liam Watson]{Liam Watson}
\thanks{Liam Watson was partially supported by an NSERC postdoctoral fellowship}
\address{Department of Mathematics, UCLA, 520 Portola Plaza, Los Angeles, CA 90095}
\email{lwatson@math.ucla.edu}
\begin{document}

\begin{abstract}
Let $Y$ be a closed, connected, orientable three-manifold admitting a genus one open book decomposition with one boundary component. We prove that if $Y$ is an L-space, then the fundamental group of $Y$ is not left-orderable. This answers a question posed by John Baldwin.
\end{abstract}

\maketitle

A group is left-orderable if it admits a strict total ordering of its elements that is invariant under multiplication on the left. By convention, the trivial group is not left-orderable; for definitions and background relevant to this paper see \cite{BGW2011,BRW2005}. Let $Y$ be a closed, connected, irreducible, orientable three-manifold. It has been conjectured that $Y$ is an L-space if and only if $\pi_1(Y)$ is not left-orderable \cite[Conjecture 3]{BGW2011}. We verify one direction of this conjecture when $Y$ admits a genus one open book decomposition with a single boundary component.

\begin{theorem}\label{thm:main}
Let $Y$ be a closed, connected, orientable three-manifold admitting a genus one open book decomposition with one boundary component. If $Y$ is an L-space then $\pi_1(Y)$ is not left-orderable.
\end{theorem}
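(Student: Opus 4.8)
The plan is to convert the statement into a question about a two-generator group presentation and then to combine Baldwin's Heegaard Floer calculation with a direct ordering argument. Since the page is a once-punctured torus $\Sigma$, the monodromy $\phi$ lies in the mapping class group fixing $\partial\Sigma$ pointwise; this group is generated by the Dehn twists $D_a, D_b$ along a standard basis of curves and injects into $\operatorname{Aut}(F_2) = \operatorname{Aut}(\pi_1\Sigma)$, where $F_2 = \langle x, y\rangle$. Writing $\phi_*$ for the induced automorphism and noting that it fixes the peripheral word $[x,y]$, I would build $Y$ as the mapping torus of $\phi$ capped off along the binding and record the balanced presentation
\[
\pi_1(Y) = \langle x, y \mid \phi_*(x) = x,\ \phi_*(y) = y\rangle,
\]
the boundary contributing no further relation precisely because $\phi_*[x,y] = [x,y]$. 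Both hypotheses—that $Y$ is an L-space and that $\pi_1(Y)$ is left-orderable—depend only on the conjugacy class of $\phi$ and are insensitive to replacing $\phi$ by $\phi^{-1}$ (which reverses the orientation of $Y$), so I may normalize $\phi$ up to conjugacy and inversion.

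Next I would invoke Baldwin's computation of $\HFhat$ for genus one, one-boundary-component open books to isolate the L-space monodromies. The expected output is a normal form: up to conjugacy and orientation, $Y$ is an L-space exactly when $\phi$ is a product of Dehn twists with a coherent sign pattern, the periodic and reducible Nielsen--Thurston types accounting for the Seifert fibered (or otherwise previously understood) manifolds $Y$, and the pseudo-Anosov type for the remaining sign-coherent words. The Seifert fibered subcase I would dispatch immediately by citing the Boyer--Rolfsen--Wiest classification of left-orderability among Seifert fibered rational homology spheres \cite{BRW2005}, under which an L-space of this type has non-left-orderable fundamental group; finite $\pi_1$ (spherical) cases are subsumed here since nontrivial finite groups are never left-orderable.

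The core of the argument is the pseudo-Anosov family, where $Y$ is typically hyperbolic and no structural classification applies, so non-left-orderability must be extracted from the presentation itself. Here the strategy is to assume $\pi_1(Y)$ carries a left-order with positive cone $P$, to run the finitely many sign possibilities for the generators $x$ and $y$, and in each case to exploit the defining relations $\phi_*(x) = x$ and $\phi_*(y) = y$: the L-space normal form forces the words $\phi_*(x)$ and $\phi_*(y)$ to be sign-coherent in $x$ and $y$, so that a fixed element is simultaneously pushed into $P$ and into $P^{-1}$, contradicting trichotomy. I expect this to be the main obstacle, because the words $\phi_*(x)$ and $\phi_*(y)$ lengthen as the twisting exponents grow, so no single short relation yields the contradiction uniformly over the whole family. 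To overcome this I would reformulate a left-order as a fixed-point-free action of $\pi_1(Y)$ on $\bR$ by orientation-preserving homeomorphisms and track how the sign-coherent twisting displaces a chosen basepoint in one direction, converting the finite casework into an inductive statement valid for all exponents.

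Assembling the Seifert fibered cases from Boyer--Rolfsen--Wiest with the inductive ordering argument for the pseudo-Anosov cases then covers every L-space arising from such an open book, which is exactly the conclusion of Theorem~\ref{thm:main}. The step I am least certain can be made fully uniform is the final contradiction in the pseudo-Anosov range, and if the dynamical reformulation proves unwieldy I would fall back on realizing these $Y$ as double branched covers of closed $3$-braids via the Birman--Hilden correspondence and importing non-left-orderability from the branched-cover side.
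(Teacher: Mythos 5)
Your setup is sound up to a point: the presentation $\pi_1(Y)=\langle x,y\mid \phi_*(x)=x,\ \phi_*(y)=y\rangle$ is correct, and the overall architecture (Baldwin's classification, dispatch the Seifert fibred cases by known results, attack the rest through an ordering argument on a presentation) parallels the paper. But there is a genuine gap at exactly the step you flag as uncertain, and the mechanism you propose for closing it fails. Baldwin's L-space monodromies are, up to conjugacy, $h^d\si_1\si_2^{-a_1}\cdots\si_1\si_2^{-a_n}$ with $d\in\{-1,0,1\}$ (plus two small sporadic families), where $h$ is the boundary Dehn twist. Under $MCG(\Sigma,\partial\Sigma)\into\operatorname{Aut}(F_2)$ the factor $h^{\pm1}$ acts by conjugation by the peripheral word $[x,y]^{\pm1}$, so for $d=\pm1$ the words $\phi_*(x)$ and $\phi_*(y)$ are \emph{not} sign-coherent in $x,y$: the commutator contributes letters of both signs, and your ``simultaneously in $P$ and in $P^{-1}$'' contradiction has no evident source. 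The genuinely sign-coherent case is $d=0$, but that is precisely the case where the branch set is alternating and non-left-orderability is already known \cite{BGW2011}; the entire difficulty of the theorem is concentrated in $d=\pm1$.

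Your fallback --- pass to the closed $3$-braid picture via Birman--Hilden and ``import'' non-left-orderability from the double branched cover --- is in fact the paper's actual route, but it is not an import: one must still \emph{prove} non-left-orderability for those branched covers, and that is the main content of the paper. Concretely, the paper takes Greene's white-graph presentation of $\pi_1(\Br(L))$ for the relevant braid closures, observes that the white graph is a single decorated cycle, and then runs a chain of rewriting lemmas (normalizing so that $y_0<1<y_{c_n}$, expressing every $y_{c_k}$ as a word in $y_0$ and $x_1y_0^{a_0-1}$ from the left and in $y_{c_n}$ and $y_{c_n}^{a_n-1}x_{m-1}$ from the right, and forcing $x_1>1$) to derive a sign contradiction from the global relation $y_0^{a_0}y_{c_1}^{a_1}\cdots y_{c_n}^{a_n}=1$ (Proposition \ref{prp:cycle}). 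That argument, or a fully worked-out substitute such as the dynamical realization you sketch, is the missing content; as written, your proposal proves the theorem only in the cases that were already known.
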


Recall that an L-space is a rational homology sphere $Y$ with simplest possible Heegaard Floer homology, in the sense that $\rk\HFhat(Y)=|H_1(Y;\bZ)|$ \cite{OSz2005-lens}. Baldwin gives a complete classification of L-spaces among manifolds admitting a genus one open book decomposition with a single boundary component \cite[Theorem 4.1]{Baldwin2008}. In particular, these may be identified as the two-fold branched covers of (the closures of) an explicit family of three-braids \cite{Baldwin2008} (see Section \ref{sec:Baldwin}). Given this classification, the proof of Theorem \ref{thm:main} follows from a study of the fundamental groups of the particular two-fold branched covers that arise. Having first collected some relevant known results concerning this family (see Proposition \ref{prp:types-2-and-3} and Proposition \ref{prp:type-1-d=0}), the main step in our proof focusses on a particular sub-class of three-braids (see Proposition \ref{prp:type-1-rest}). We make use of a group presentation for the two-fold branched cover derived from the white graph of a diagram for the branch set due to Greene \cite{Greene2008, Greene2011} (see Section \ref{sec:Greene}), and show that when this presentation is associated with a graph of a particular form, the resulting group cannot be left-orderable (see Proposition \ref{prp:cycle}).

Before turning to the requisite material for the proof of Theorem \ref{thm:main} recall that, in the context of Heegaard Floer homology and two-fold branched covers, a natural extension of the class of alternating links is given by quasi-alternating links (see \cite[Definition 3.1]{OSz2005-branch}). We remark that, as Baldwin gives a complete classification of quasi-alternating links that are the closures of three-braids \cite[Theorem 8.7]{Baldwin2008}, it follows immediately from Theorem \ref{thm:main} that:

\begin{corollary}If $L$ is a quasi-alternating link with braid index at most three then the two-fold branched cover of $L$ has non-left-orderable fundamental group. \end{corollary}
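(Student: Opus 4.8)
The plan is to deduce the corollary by assembling Theorem \ref{thm:main} with two well-known inputs, so that the only real work lies in matching up the hypotheses; no new argument is needed. First I would rephrase the braid index hypothesis as a statement about three-braids. By definition, a link $L$ of braid index at most three is the closure $\hat\beta$ of a braid on at most three strands; applying Markov stabilization to a one- or two-strand braid (replacing $\beta \in B_n$ by $\iota(\beta)\,\sigma_n^{\pm 1} \in B_{n+1}$, which preserves the closure up to isotopy) I may assume $\beta \in B_3$, so that $L = \hat\beta$ for a genuine three-braid $\beta$.

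Next I would identify the two-fold branched cover $\Sigma(L)$ as a manifold of the type to which Theorem \ref{thm:main} applies. The two-fold cover of the disk $D^2$ branched over three marked points is, by a Riemann--Hurwitz count, a surface of genus one with a single boundary component (it is crucial here that three is odd, so that $\partial D^2$ lifts to a connected circle rather than to two circles). Viewing $\beta$ as an element of the mapping class group of the thrice-punctured disk, it lifts to a mapping class of this once-punctured torus, and the resulting open book realizes $\Sigma(\hat\beta)$. Hence $Y := \Sigma(L)$ admits a genus one open book decomposition with one boundary component; this is precisely the branched-cover description of such manifolds underlying \cite{Baldwin2008}.

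Finally, since $L$ is quasi-alternating, its two-fold branched cover $\Sigma(L)$ is an L-space by \cite{OSz2005-branch}; equivalently, Baldwin's classification of quasi-alternating three-braid closures \cite[Theorem 8.7]{Baldwin2008} places $L$ inside the explicit family whose branched covers are exhibited as L-spaces in \cite[Theorem 4.1]{Baldwin2008}. With $Y$ now a genus one, one-boundary open book L-space, Theorem \ref{thm:main} applies verbatim to give that $\pi_1(Y) = \pi_1(\Sigma(L))$ is not left-orderable, as required. The only step demanding any care --- and hence the nearest thing to an obstacle --- is the branched-cover bookkeeping of the second paragraph, namely confirming that three strands yield exactly one boundary component and that the lifted monodromy is well defined; the substantive content of the corollary is carried entirely by Theorem \ref{thm:main}.
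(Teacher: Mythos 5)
Your proposal is correct and follows essentially the same route as the paper, which likewise treats the corollary as an immediate consequence of Theorem \ref{thm:main} once one knows that quasi-alternating three-braid closures have L-space branched double covers (the paper cites Baldwin's classification \cite[Theorem 8.7]{Baldwin2008}, while you invoke the general quasi-alternating-implies-L-space fact from \cite{OSz2005-branch}; either suffices). Your careful check that the double cover of the disk branched over three points is a once-punctured torus, so that $\Sigma(L)$ is indeed a genus one open book with one boundary component, is exactly the bookkeeping the paper leaves implicit.
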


It is natural to posit, in light of \cite[Conjecture 3]{BGW2011}, that the fundamental group of the two-fold branched cover of any quasi-alternating link is not left-orderable. This is known for non-split alternating links  \cite[Theorem 8]{BGW2011}, and further infinite families of examples may be obtained by combining the examples of quasi-alternating links that arise in \cite{Watson2009} (as the branch sets of certain L-spaces obtained via Dehn surgery) with results about Dehn surgery and non-left-orderability established in \cite{CW2010,CW2011}. In particular, combine \cite[Theorem 28]{CW2010} with \cite[Theorem 5.1]{Watson2009} and/or  \cite[Theorem 3]{CW2011} with \cite[Theorem 6.1]{Watson2009}. Other examples are studied in work of Ito \cite{Ito2011} and Peters \cite{Peters2009}. The study of quasi-alternating links in this context is closely related to the study of Dehn surgery questions pertaining to left-orderability. In particular, properties of L-spaces suggest that left-orderability of the fundamental group of a 3-manifold should be well behaved under Dehn surgery (see \cite[Question 8]{CW2010} and \cite[Question 3.1]{Greene2011}, for example).

\subsection*{Acknowledgements} This work formed part of an undergraduate research project undertaken while the first author was a participant in the CSST summer program at UCLA. We thank John Baldwin for suggesting the question answered by Theorem \ref{thm:main}. 

\section{On Baldwin's classification}\label{sec:Baldwin}

Murasugi gives a complete classification of three-braids up to conjugacy \cite{Murasugi1974} (compare \cite[Theorem 2.2]{Baldwin2008}). As a strict subset of these, Baldwin suggests the following families (of conjugacy classes) of three-braids:

\begin{itemize}
\item[(1)] $h^d\si_1\si_2^{-a_1}\cdots\si_1\si_2^{-a_n}$ where $a_i\ge0$, $a_j\ne0$ for some $0\le j\le n$ and $d=-1,0,1$
\item[(2)] $h^d\si_2^m$ where $d=\pm 1$
\item[(3)] $h^d\si_1^m\si_2^{-1}$ where $m=-1,-2,-3$ and $d=-1,0,1,2$.
\end{itemize}

In these classes, $h=(\si_2\si_1)^3$ denotes the full-twist on three strands. With these three families in hand, we summarize Baldwin's classification of L-spaces as follows:

\begin{theorem}[Baldwin {\cite[Section 2 and Theorem 4.1]{Baldwin2008}}]\label{thm:Baldwin}
A genus one open book decomposition with a single boundary component is an L-space if and only if it is the two-fold branched cover of the closure of a braid of type (1),  (2) or (3).\end{theorem}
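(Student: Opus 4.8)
The plan is first to recast genus one open books as three-braids and then to detect the L-space condition through Khovanov homology. A genus one open book with a single boundary component is determined by its monodromy, an element of the mapping class group of the once-punctured torus fixing the boundary pointwise. Via the hyperelliptic involution the once-punctured torus is the double cover of the disk branched over three points, and under this identification the relevant mapping class group is the three-strand braid group $B_3 = \langle \si_1, \si_2 \mid \si_1\si_2\si_1 = \si_2\si_1\si_2\rangle$, with the boundary Dehn twist corresponding to the central full twist $h = (\si_2\si_1)^3$. Moreover the three-manifold $Y$ carried by the open book with monodromy $\be$ is precisely the two-fold branched cover $\Si(\widehat{\be})$ of the braid closure $\widehat{\be}$. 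Since $\Si(\widehat{\be})$ depends only on the conjugacy class of $\be$, Murasugi's classification reduces the theorem to deciding, for each conjugacy class of three-braids, whether the associated branched cover is an L-space.

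Next I would reduce this to a statement purely about the closed braid $\widehat{\be}$. The \os\ spectral sequence has $E_1$-page the reduced Khovanov homology $\Khred(\widehat{\be};\bF)$ and converges to $\HFhat(\Si(\widehat{\be});\bF)$, whence $\rk\HFhat(\Si(\widehat{\be})) \le \rk\Khred(\widehat{\be})$. On the other hand $\Si(\widehat{\be})$ is a rational homology sphere with $|H_1(\Si(\widehat{\be});\bZ)| = \det(\widehat{\be})$, and evaluating the graded Euler characteristic of $\Khred(\widehat{\be})$ at a suitable root of unity recovers $\pm\det(\widehat{\be})$, so the triangle inequality gives $\rk\Khred(\widehat{\be}) \ge \det(\widehat{\be})$. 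Since $\rk\HFhat(\Si(\widehat{\be})) \ge |H_1| = \det(\widehat{\be})$ always, the two inequalities can be simultaneous equalities only when all four quantities agree. Thus $\Si(\widehat{\be})$ is an L-space if and only if $\rk\Khred(\widehat{\be}) = \det(\widehat{\be})$, that is, if and only if $\widehat{\be}$ is Khovanov-thin, and the theorem becomes the combinatorial claim that the thin closed three-braids are exactly the closures of braids of types (1), (2) and (3).

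The remaining and most laborious step, which I expect to be the main obstacle, is to compute $\Khred(\widehat{\be})$ --- or at least to compare its rank with the determinant --- across all of Murasugi's conjugacy classes. I would organize the analysis by the standard trichotomy of $SL_2(\bZ)$ (periodic, reducible, pseudo-Anosov), reflected in the exponent sum and the power $d$ of the full twist $h$, and use the known computations of Khovanov homology of closed three-braids. The families (2) and (3) are simple enough to treat directly, and the central twists $h^{d}$ contribute only a controlled shift; the delicate work concentrates on the type (1) braids $h^{d}\si_1\si_2^{-a_1}\cdots\si_1\si_2^{-a_n}$, whose closures carry alternating-type diagrams. For these one must verify that thinness holds exactly in the stated range of $d$ and exponents, and conversely that every conjugacy class outside types (1)--(3) --- for instance those carrying too much positive twisting --- acquires extra off-diagonal generators forcing $\rk\Khred > \det$. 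Assembling this case analysis with the low-complexity and degenerate cases completes the identification of the thin three-braids and hence proves the theorem.
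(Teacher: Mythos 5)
This statement is not proved in the paper at all: it is quoted verbatim from Baldwin, and the authors use it as a black box. So the only question is whether your proposed proof would actually work, and unfortunately it contains a genuine logical gap at its central step. Your first paragraph is fine: the identification of the mapping class group of the once-punctured torus with $B_3$ via the hyperelliptic involution, and of the open book manifold with the two-fold branched cover $\Si(\widehat{\be})$, is exactly the right setup, and reducing to Murasugi's conjugacy classes is what Baldwin does. The problem is the claimed equivalence ``$\Si(\widehat{\be})$ is an L-space if and only if $\widehat{\be}$ is Khovanov-thin.'' The inequalities you list only give one direction. Thinness ($\rk\Khred = \det$) forces $\rk\HFhat(\Si(\widehat{\be})) \le \det = |H_1|$ via the spectral sequence, hence an L-space. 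But an L-space only gives $\rk\HFhat = \det \le \rk\Khred$, which is perfectly consistent with $\rk\Khred > \det$: the spectral sequence is then simply not degenerate. Your assertion that ``the two inequalities can be simultaneous equalities only when all four quantities agree'' conflates the hypothesis (L-space) with the stronger condition (thin). Worse, the failure occurs inside the very family you need to classify: the paper itself notes that the type (3) braid closures include $10_{124}=T(3,5)$ and $8_{19}=T(3,4)$, whose two-fold branched covers are the Poincar\'e sphere and $+3$-surgery on the trefoil --- L-spaces with $\det = 1$ and $3$ respectively, while the reduced Khovanov homology of these torus knots has strictly larger rank. So the criterion you propose would wrongly exclude braids that the theorem must include.

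Beyond that, the third paragraph is a plan rather than an argument: the case analysis over Murasugi's classes, which is where all the content lies, is deferred (``I would organize\dots'', ``one must verify\dots''). Baldwin's actual proof does not pass through Khovanov thinness for the L-space classification; he computes the Heegaard Floer homology of these fibred manifolds directly (using surgery exact triangles and the structure of the monodromies), and Khovanov-theoretic arguments enter only later, in his separate classification of quasi-alternating three-braid closures. If you want to salvage a Khovanov-based route you would at minimum have to replace ``thin'' by a criterion that is genuinely equivalent to the L-space condition, or handle the non-thin L-space cases (the Seifert fibred ones of types (2) and (3)) by a separate geometric argument, as the present paper in effect does in Proposition \ref{prp:types-2-and-3}.
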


Towards the proof of Theorem \ref{thm:main} we make two observations. 

\begin{proposition}\label{prp:types-2-and-3}If $Y$ is the two-fold branched cover of a braid of type (2) or (3) then $\pi_1(Y)$ is finite and hence not left-orderable.\end{proposition}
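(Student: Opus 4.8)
The plan is to show that these two-fold branched covers are among the well-understood spherical space forms (or at least manifolds with finite fundamental group), and then invoke the elementary fact that a nontrivial finite group cannot be left-orderable. The key point is that braids of type (2) and (3) are special: their closures are relatively simple links, and the corresponding genus one open books yield manifolds with finite fundamental group.

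First I would identify, for each conjugacy class in family (2) and (3), the resulting three-manifold $Y$ explicitly. For type (2), the braid $h^d\si_2^m$ has a closure that is a connected sum or a simple torus/two-bridge link, and since $h=(\si_2\si_1)^3$ is a full twist, the monodromy of the associated open book is a product of Dehn twists that should be recognizable as periodic (finite order) up to the boundary-parallel twist. I expect that type (2) covers are lens spaces or small Seifert fibered spaces with finite $\pi_1$. For type (3), where $m\in\{-1,-2,-3\}$ and $d\in\{-1,0,1,2\}$, there are only finitely many conjugacy classes, so I would handle these as a finite explicit list: compute each two-fold branched cover (e.g. via the Goeritz or branched-cover presentation, or by recognizing the branch link as a two-bridge or Montesinos link) and check that $\pi_1(Y)$ is finite in each case.

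The cleanest route is probably to exploit the fact that these manifolds are already known: Baldwin's work, together with the classification of genus one fibered knots/links and the structure of $SU(2)$ or spherical geometry, should tell us that the type (2) and (3) covers are precisely the elliptic (spherical) three-manifolds arising in this family — in particular, they are Seifert fibered over $S^2$ with at most three exceptional fibers and finite orbifold fundamental group. I would therefore cite the relevant identification (or reproduce it by recognizing each branch set as a Montesinos link whose double cover is a Seifert fibered space, then reading off $\pi_1$) and verify finiteness directly. Once $\pi_1(Y)$ is known to be finite, the conclusion is immediate: a left-orderable group is torsion-free, and by convention the trivial group is not left-orderable either, so no nontrivial finite group is left-orderable.

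The main obstacle I anticipate is the bookkeeping in correctly identifying the homeomorphism type of each $Y$ from the braid data, rather than any conceptual difficulty. Converting the open book monodromy (a composition of Dehn twists on the once-punctured torus together with the full twist $h^d$) into a recognizable closed manifold, and matching it with a spherical space form, requires care with orientation and framing conventions. I would minimize this risk by working through the two-fold branched cover description directly from the braid closure, where the finiteness of the list in type (3) makes a case-by-case check feasible, and where type (2) reduces to a single infinite but uniform family that can be handled in one computation.
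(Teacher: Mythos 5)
Your plan is essentially the paper's proof: the paper identifies the type (2) branch sets as pretzel knots whose covers are Seifert fibred over $S^2(2,2,2+m)$, and handles type (3) as a short explicit list (lens spaces for $d=0,1$; for the rest, $+1$-, $+2$-, $+3$-surgery on the trefoil, Seifert fibred over $S^2(2,3,n)$ with $n=5,4,3$), concluding in each case that the manifold is elliptic and hence has finite, non-left-orderable fundamental group. The only thing to be careful about, which you flag but defer, is actually pinning down the base orbifolds so that "small Seifert fibred" really does force finite $\pi_1$ --- it is the spherical triangle/dihedral types $(2,2,k)$ and $(2,3,n)$, $n\le 5$, that make this work.
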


\begin{proof}First consider the closures of the braids of type (2). In this case, consulting \cite[Proof of Theorem 8.7 part {\bf II}]{Baldwin2008}, the branch set $\overline{h^{\pm1}\si_2^m}$ in question is a pretzel knot encodes a Seifert structure with base orbifold $S^2(2,2,2+m)$ (see in particular  \cite[Figure 9]{Baldwin2008}). It follows from Scott's classification of Seifert structures \cite{Scott1983} (compare \cite[Proposition 2.3]{OSz2005-lens}) that the two-fold branched coved in question admits elliptic geometry, and hence the fundamental group is finite as claimed.

For braids of type (3), we may appeal to \cite[Proof of Theorem 8.7 part {\bf III}]{Baldwin2008}. First note that in the case $d=0$ and $d=1$ the branch sets in question are the (two-bridge) torus knots $T(2,m)$ and $T(2,m+4)$, respectively. As the resulting two-fold branched covers must be lens spaces, the corresponding fundamental groups are finite cyclic. Up to mirrors, the remaining branch sets may be viewed as the closures of $d^5=d^4\si_1^2,d^4\si_1,d^4$ (these are the links $10_{124}$, $L_{9n12}$, $8_{19}$, respectively). The corresponding two-fold branched covers may also be obtained (up to orientation reversal) by $+1$-, $+2$- and $+3$-surgery on the right-hand trefoil, respectively, giving Seifert structures with base orbifold $S^2(2,3,n)$ for $n=5,4,3$. As in the case of type (2), these admit elliptic geometry and hence the fundamental group is finite as claimed.  \end{proof}

\begin{remark} For Seifert fibred spaces, L-space is equivalent to non-left-orderable fundamental group \cite[Theorem 4]{BGW2011}. Therefore, combined with the fact that the branch sets of type (2) and (3) have two-fold branched covers that are L-spaces, it is sufficient for our purposes to simply note that all of these manifolds are Seifert fibred. It seems interesting to note, however, the stronger statement that all of these groups are indeed finite. \end{remark}

\begin{proposition}\label{prp:type-1-d=0}If $Y$ is the two-fold branched cover of a braid of type (1) with $d=0$ then the branch set is alternating and hence $\pi_1(Y)$ is not left-orderable.\end{proposition}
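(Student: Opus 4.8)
The plan is to prove the two assertions in turn: first that, when $d=0$, the branch set is a non-split alternating link, and then to invoke the known fact that the two-fold branched cover of a non-split alternating link has non-left-orderable fundamental group \cite[Theorem 8]{BGW2011}. With $d=0$ the braid in question is $\beta=\si_1\si_2^{-a_1}\cdots\si_1\si_2^{-a_n}$ with each $a_i\ge 0$, a word in which $\si_1$ occurs only with positive exponent and $\si_2$ only with negative exponent. I would show directly that the standard closed-braid diagram $D$ of $\overline{\beta}$ is alternating.

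The crux is a bookkeeping observation about the middle of the three braid positions. Every crossing of $D$, whether it arises from $\si_1$ (positions $1,2$) or from $\si_2^{-1}$ (positions $2,3$), involves this middle position. Adopting the convention that in the positive crossing $\si_i$ the overstrand runs from position $i$ to position $i+1$, one checks that in $\si_1$ the overstrand exits the crossing in the middle position, and that in $\si_2^{-1}$ --- being the reverse of $\si_2$, whose overstrand runs from position $2$ to position $3$ --- the overstrand also exits in the middle position. Equivalently, for both crossing types the strand entering from the middle position is the understrand. It is exactly the opposite signs of the two generators, one positive and one inverted, that funnel both overstrands into the common middle position; had the two generators shared a sign this would fail.

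Granting this, I would trace an arbitrary strand around $\overline{\beta}$. Whenever the strand passes over a crossing it emerges in the middle position, so the very next crossing --- which again occupies the middle position --- sends it under and pushes it out to an outer position; it then sits in that outer position, a bystander to the crossings not involving it, until the next crossing it meets, which it enters from an outer position and hence passes over. Over- and under-passages therefore strictly alternate along every strand, so $D$ is an alternating diagram. Since at least one $a_i$ is nonzero, both $\si_1$ and $\si_2^{-1}$ appear, so all three strands are linked and $\overline{\beta}$ is non-split; \cite[Theorem 8]{BGW2011} then gives the conclusion.

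I expect the only delicate point to be fixing the crossing conventions correctly and confirming that the alternation produced along the braid closes up consistently around each component (equivalently, that the over- and under-passages pair off, leaving no component with an odd count). This is forced by the middle-position bookkeeping above, so once the alternating diagram is in hand the non-splitness and the appeal to \cite[Theorem 8]{BGW2011} are routine.
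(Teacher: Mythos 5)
Your proposal is correct and follows the same route as the paper: observe that for $d=0$ the braid is a word in $\si_1$ and $\si_2^{-1}$ only, so its closure is alternating (the paper dismisses this as ``on inspection of the diagram,'' whereas you supply the middle-position bookkeeping that justifies it), and then apply \cite[Theorem 8]{BGW2011} to the resulting non-split alternating link. The extra care you take with the alternation closing up around each component and with non-splitness is sound and only makes explicit what the paper leaves implicit.
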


\begin{proof} The braids of type (1) with $d=0$ are alternating on inspection of the diagram (compare \cite[Proof of Theorem 8.7 part {\bf I}]{Baldwin2008}); the result then follows from \cite[Theorem 8]{BGW2011}. \end{proof}

Thus, to prove Theorem \ref{thm:main}, it remains to show that the two-fold branched cover of the closure of a braid of type (1) with $d=\pm 1$ has non-left-orderable fundamental group.

\section{On Greene's presentation}\label{sec:Greene}

For our purposes, a convenient description of the fundamental group of the two-fold branched cover of a link $L$ is given as follows. Let $\Gamma$ be the white graph of a checkerboard colouring of the link $L$. Decorate the edges of $\Gamma$ according to the convention in Figure \ref{fig:conventions} and distinguish an arbitrary vertex $r$ (the {\em root}). Consider the group \[G_\Gamma=\langle x_1,\ldots, x_n | r_1,\ldots,r_n, x_r \rangle\]
where the generators $x_i$ are in one-to-one correspondence with the vertices of $\Gamma$, and the relations are specified as follows. At each edge $(x_i,x_j)$ incident to a vertex $x_i$ define the word $w_j^i=(x_j^{-1}x_i)^{\epsilon(x_i,x_j)}$, where $\epsilon(x_i,x_j)$ is the sign on the edge. Then $r_i$ is the product of the $w_j^i$ read in counter-clockwise order around a small loop centred at the vertex $x_i$.

\begin{figure}[ht!]
\begin{center}
\labellist
\pinlabel $+$ at 160 380
\pinlabel $-$ at 610 380
\endlabellist
\includegraphics[scale=0.27]{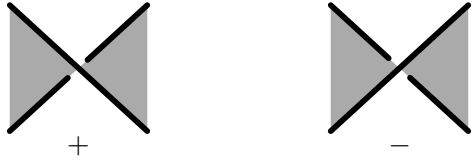}
\caption{Sign conventions at a crossing given a colouring of a knot diagram.}\label{fig:conventions}
\end{center}
\end{figure}

\begin{theorem}[Greene {\cite[Proposition 1.1]{Greene2011}}]Let $\Br(L)$ denote the two-fold branched cover of $L$, and fix a white graph $\Gamma$ for some diagram of $L$. Then $\pi_1(\Br(L))\cong G_\Gamma$.\end{theorem}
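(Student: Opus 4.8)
The plan is to realize $\Br(L)$ by an explicit construction read off from the checkerboard-coloured diagram, apply the Seifert--van Kampen theorem to recover a presentation with one generator and one relation per white region, and then match this presentation with $G_\Gamma$ crossing-by-crossing. Fix a connected diagram $D$ of $L$ together with the checkerboard colouring used to define $\Gamma$, so that the white regions $R_1,\ldots,R_n$ are the vertices and each crossing contributes a single signed edge joining the two white regions meeting there.

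First I would build a model of $\Br(L)$ adapted to the colouring. Decompose $S^3$ relative to the checkerboard surface $F$ determined by the colouring (whose complementary regions are exactly the white regions), and lift this decomposition to the branched cover. Over each white region $R_i$ the cover contributes a piece carrying a single loop $x_i$ naturally associated to $R_i$, while over each crossing the two sheets are interchanged by a half-twisted band of $F$, which is precisely where the sign $\epsilon(x_i,x_j)$ and the factor $(x_j^{-1}x_i)^{\epsilon(x_i,x_j)}$ enter. Applying van Kampen to this decomposition produces generators $x_1,\ldots,x_n$ and, for each white region, a relation $r_i$ obtained by reading the contributions of the incident crossings in cyclic (counter-clockwise) order around $R_i$. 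The additional relation $x_r=1$ is the basepoint normalisation: choosing the root $r$ amounts to placing the basepoint in the corresponding region, so its loop is nullhomotopic and the generator is killed.

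The main obstacle is the local bookkeeping at each crossing: one must verify that the half-twist in the branched cover contributes exactly the word $(x_j^{-1}x_i)^{\epsilon(x_i,x_j)}$ with the sign dictated by Figure \ref{fig:conventions}, and that assembling these factors in counter-clockwise order around a vertex yields $r_i$ rather than its inverse or a cyclically rotated word. A useful consistency check along the way is to abelianise: the exponent sums in the relations $r_i$ must reproduce the signed Goeritz matrix of $\Gamma$, which is the standard presentation matrix of $H_1(\Br(L);\bZ)$; since the rows of that matrix sum to zero, one of the $r_i$ is redundant, and this correctly balances the extra relation $x_r$ against the $n$ generators. Once the sign and ordering are pinned down at a single model crossing -- every crossing of the diagram being locally identical up to the colouring -- the match propagates across the whole diagram, yielding $\pi_1(\Br(L))\cong G_\Gamma$ for the given $\Gamma$.
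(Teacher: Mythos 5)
First, a point of comparison: the paper does not prove this statement at all --- it is quoted from Greene \cite[Proposition 1.1]{Greene2011}, so there is no internal argument to measure your proposal against; the relevant comparison is with Greene's own derivation, which obtains the presentation from the link group (via a Wirtinger/Dehn-type presentation and the standard description of the fundamental group of a two-fold branched cover as a quotient of an index-two subgroup of $\pi_1(S^3\setminus L)$ with squared meridians killed). Your route --- splitting $\Br(L)$ along the preimage of a checkerboard surface and applying Seifert--van Kampen --- is a legitimate alternative and is the geometric picture underlying the usual derivation of the Goeritz matrix, so the strategy is sound.

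That said, as written the proposal has a genuine gap exactly where you flag ``the main obstacle.'' Two things are left unestablished. First, the decomposition itself is not pinned down: the complement of the (say) black checkerboard surface $F$ is a single open handlebody, not a disjoint union of pieces ``over each white region,'' and its preimage $\widetilde F$ in $\Br(L)$ is a Heegaard surface splitting the cover into two handlebodies. The generators $x_i$ arise as a system of curves indexed by the white regions, and the relations $r_i$ as boundaries of compressing discs of one handlebody; without identifying those discs, van Kampen only guarantees \emph{some} presentation of the right deficiency, not this one. Second, the local computation at a crossing --- that the half-twisted band contributes precisely $(x_j^{-1}x_i)^{\epsilon(x_i,x_j)}$ with the sign of Figure \ref{fig:conventions}, assembled in counter-clockwise order --- is the entire content of the theorem, and it is asserted rather than carried out. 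The abelianization check against the Goeritz matrix is a useful consistency test, but it cannot detect errors that vanish upon abelianization (a conjugated or cyclically permuted $r_i$, or a reading order reversed at some vertices), and these are exactly the errors the nonabelian statement is sensitive to; indeed the applications in this paper (Lemmas \ref{lem:x}--\ref{lem:right}) depend on the precise noncommutative form of the relations. To complete the argument you would need to exhibit the compressing discs explicitly near a single crossing, verify the word there, and check that the discs glue coherently around each white region.
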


To state our next result, we will make use of slightly different graph, $\widetilde{\Gamma}$. This is obtained from the signed white graph $\Gamma$ by removing the root vertex $r$, and decorating each remaining vertex $i$ with an integer specifying the number of edges between $i$ and $r$, with sign. It is immediate that $\widetilde{\Gamma}$ retains enough information to reproduce the presentation $G_\Gamma$, provided we record the region in the plane that contained the root $r$. For our purposes, the root vertex will be in the unbounded region of the plane.

\begin{figure}[ht!]
\begin{center}
\labellist
\pinlabel $\cdots$ at 115 152
\pinlabel $\cdots$ at 115 -2
\pinlabel \rotatebox{-52}{$\cdots$} at 33 18
\pinlabel \rotatebox{52}{$\cdots$} at 177 50

\pinlabel $\cdots$ at 365 152
\pinlabel $\cdots$ at 365 -2
\pinlabel \rotatebox{-52}{$\cdots$} at 283 18
\pinlabel \rotatebox{52}{$\cdots$} at 427 50

\small

\pinlabel $a_0$ at -8 75
\pinlabel $a_n$ at 196 75

\pinlabel $a_1$ at 47 -6
\pinlabel $a_{n-1}$ at 152 -6

\pinlabel $x_1$ at 290 155
\pinlabel $x_{m-1}$ at 402 155

\pinlabel $y_0\!=\!x_0$ at 229 75
\pinlabel $y_{c_n}\!=\!x_m$ at 462 75

\pinlabel $y_1$ at 253 55
\pinlabel $y_2$ at 262 40
\pinlabel $y_{c_1}$ at 297 -7
\pinlabel $y_{c_1+1}$ at 326 -7

\pinlabel $y_{c_{n-1}}$ at 402 -7
\pinlabel $y_{c_{n-1}+1}$ at 422 16
\pinlabel $y_{c_{n-1}+2}$ at 433 32

\pinlabel $-$ at 20 120
\pinlabel $-$ at 70 155
\pinlabel $-$ at 168 120

\pinlabel $-$ at 270 120
\pinlabel $-$ at 320 155
\pinlabel $-$ at 418 120
\endlabellist
\includegraphics[scale=0.8]{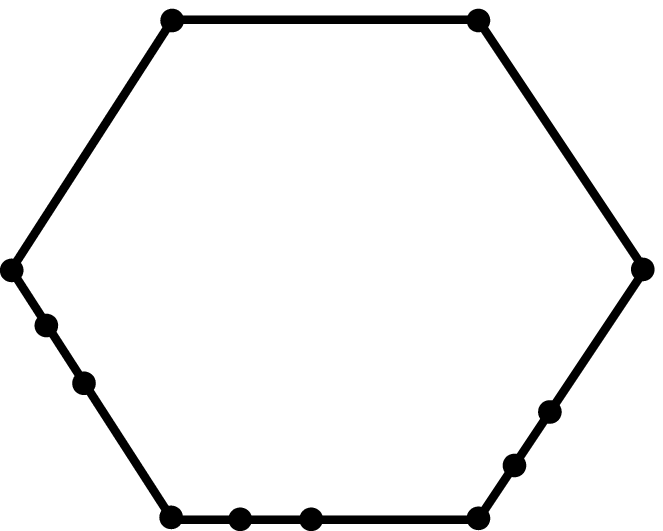}
\qquad\qquad
\includegraphics[scale=0.8]{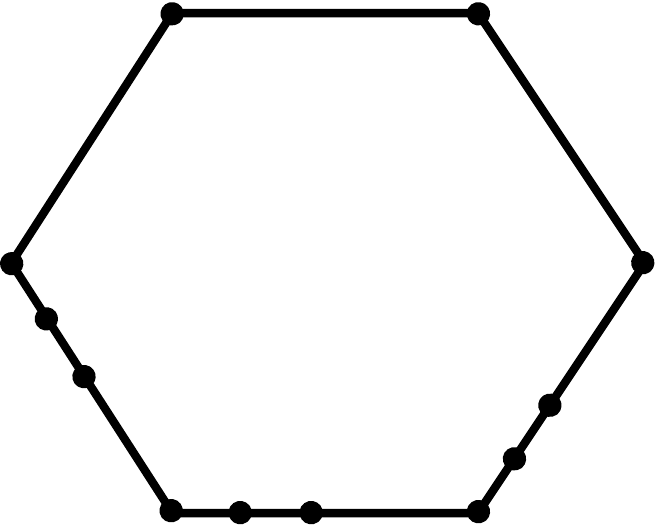}
\caption{The form of $\widetilde{\Gamma}$ considered in the hypothesis of Proposition \ref{prp:cycle}. On the left $\widetilde{\Gamma}$ is shown, where unmarked vertices should be labelled with $0$ and unmarked edges should be labelled with $+$. On the right the vertices are labelled with their corresponding generator in $G_\Gamma$.}\label{fig:cycle}
\end{center}
\end{figure}

Now consider the case wherein $\widetilde{\Gamma}$ consists of a single cycle of the form shown in Figure \ref{fig:cycle}. Then $G(\Gamma)$ is generated by $c_n+m+1$ elements denoted \[x_1,\ldots, x_{m-1},y_0,y_1,\ldots,y_{c_n},z\] where $z$ is the generator associated with the vertex $r$ removed from $\Gamma$ to form $\widetilde{\Gamma}$. Note that the generators $y_{c_k}$ for $0\le k \le n$ are precisely those corresponding the the vertices labelled with the $a_k>0$. Set $c_k=b_1+\cdots + b_k$ where $b_k-1$ is the number of vertices {\em between} the vertices labelled $a_{k-1}$ and $a_k$, and let $c_0=0$. For future reference, we denote the relations by $r(g)$ for a given vertex labelled with a generator $g$. Assuming that $n>0$, the relations of $G_\Gamma$ have the following form:
\begin{align*}
r(x_i) &= (x_{i+1}^{-1}x_i)^{-1}(x_{i-1}^{-1}x_i)^{-1} \quad{\rm where}\quad 0<i<m\\
r(y_0) & =  (x_1^{ - 1}{y_{0})^{-1}y_{0}}^{{a_0}}(y_{1}^{ - 1}{y_{0}})\\
r(y_{c_k}) &= (y_{{c_k} - 1}^{ - 1}{y_{{c_k}}})y_{{c_k}}^{{a_k}}(y_{{c_k} + 1}^{ - 1}{y_{{c_k}}}) \quad{\rm where}\quad 0<k<n \\
r(y_{c_n}) & =(y_{c_n-1}^{ - 1}{y_{c_n}})  y_{c_n}^{{a_n}}(x_{m-1}^{ - 1}y_{c_n})^{-1} \\
r(y_j) &=  (y_{i+1}^{-1}y_i)(y_{i-1}^{-1}y_i) \quad{\rm where}\quad i\ne c_k \quad{\rm for}\quad 0<k<n
\end{align*}
Recall that there are two additional relations $z$ (i.e. $z=1$) and $r(z)=y_{{c_n}}^{{-a_n}}\cdots y_1^{{-a_1}}y_{{c_0}}^{{-a_0}}$. Of course, the latter is equivalent to $y_0^{{a_0}}y_{{c_1}}^{{a_1}} \cdots y_{{c_n}}^{{a_n}}=1$.

\begin{proposition}\label{prp:cycle} If $\widetilde{\Gamma}$ is of the form shown in Figure \ref{fig:cycle} and either $m > 1$ or $m = 1$ and ${a_0} ,{a_n} > 1$, then $G_\Gamma$ is not left-orderable.\end{proposition}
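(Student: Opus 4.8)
The plan is to argue by contradiction from an assumed left-ordering, after first rewriting the relations in terms of \emph{increments}. Reading the relations $r(x_i)$ forces the successive differences $x_{i-1}^{-1}x_i$ to be a single common element $t$, so that $x_i=y_0t^{\,i}$ and in particular $y_{c_n}=y_0t^{\,m}$. Along the $y$-arc the relations at the $0$-labelled vertices force the increment to be constant on each block between consecutive marked vertices; write $\sigma_k$ for the increment on the $k$-th block. The relations at the marked vertices then read $\sigma_1=ty_0^{a_0}$, $\sigma_{k+1}=\sigma_k y_{c_k}^{a_k}$ for $0<k<n$, and $\sigma_n y_{c_n}^{a_n}=t$, together with $y_{c_k}=y_{c_{k-1}}\sigma_k^{\,b_k}$; chaining these recovers exactly the global relation $y_0^{a_0}y_{c_1}^{a_1}\cdots y_{c_n}^{a_n}=1$.

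Now suppose $G_\Gamma$ carries a left-ordering with positive cone $P$. I will use two elementary facts: a product of elements that are all $\ge 1$, at least one of which is $>1$, is itself $>1$; and the reverse of a left-ordering is again a left-ordering (so one sign may be normalized freely). The engine of the proof is a \emph{propagation lemma}: if $y_0\ge 1$ and $\sigma_1\ge 1$, then inductively from $y_{c_k}=y_{c_{k-1}}\sigma_k^{\,b_k}$ and $\sigma_{k+1}=\sigma_k y_{c_k}^{a_k}$ every $y_{c_k}$ and every $\sigma_k$ is $\ge 1$; since some inequality is strict this makes $y_0^{a_0}\cdots y_{c_n}^{a_n}>1$, contradicting the global relation. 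Reversing the order gives the mirror statement with all inequalities flipped, and reading the cycle from the $y_{c_n}$-end (which interchanges the data $(y_0,a_0,t)$ and $(y_{c_n},a_n,t^{-1})$) gives the analogous trigger governed by $\sigma_n=ty_{c_n}^{-a_n}$ at the other junction.

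These triggers dispose of most cases. If $y_0=1$ then $\sigma_1=t$, and avoiding both the positive and the reversed trigger would require $t<1$ and $t>1$ simultaneously; hence $y_0\ne1$, and after reversing the order I may assume $y_0>1$. Avoiding the triggers at the $y_0$-junction then forces $\sigma_1<1$, whence $t=\sigma_1 y_0^{-a_0}<1$. Finally, if $y_{c_n}>1$ the trigger at the $y_{c_n}$-junction forces $\sigma_n>1$, i.e.\ $t>y_{c_n}^{a_n}>1$, contradicting $t<1$; so necessarily $y_{c_n}<1$. Thus everything reduces to the single configuration $y_0>1>y_{c_n}$ with $t<1$.

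The hard part will be closing this last configuration, where the signs of the interior increments $\sigma_k$ and marked values $y_{c_k}$ are genuinely indeterminate and the global relation alone is consistent with mixed signs. My plan here is to extract monotonicity: since $\sigma_{k+1}$ exceeds $\sigma_k$ exactly when $y_{c_k}>1$, the sequence $\sigma_1,\dots,\sigma_n$ is unimodal, and I would combine this with the boundary identities $\sigma_1=ty_0^{a_0}$, $\sigma_n=ty_{c_n}^{-a_n}$ and the quantitative constraint $y_{c_n}=y_0t^{\,m}$ to force all the $y_{c_k}$ to share a sign after all, contradicting the previous paragraph. This is precisely the step that uses the hypothesis: when $m>1$ the $x$-arc contributes interior vertices $y_0t,\dots,y_0t^{\,m-1}$ that interpolate monotonically between $y_0>1$ and $y_{c_n}<1$ and pin the sign of $t$ against the two junctions, whereas when $m=1$ the $x$-arc is a single edge and one must instead lean on $a_0,a_n>1$ to make the two junction kinks strong enough to reach the same conclusion. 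The delicate bookkeeping is at these junctions, where the $-$-labelled $x$-arc meets the $+$-labelled $y$-arc and the increment gets inverted; controlling the signs across both transitions, with the help of $m>1$ or $a_0,a_n>1$, is where the real obstacle lies.
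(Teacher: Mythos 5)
Your reduction is sound as far as it goes: the increment bookkeeping ($x_i=y_0t^i$, constant increments $\sigma_k$ on each block, the junction identities $\sigma_1=ty_0^{a_0}$, $\sigma_{k+1}=\sigma_ky_{c_k}^{a_k}$, $\sigma_ny_{c_n}^{a_n}=t$) is a correct repackaging of the relations, and your two propagation triggers legitimately force the normalization $y_0>1>y_{c_n}$ with $t<1$ --- this is essentially Lemma \ref{lem:WLOG} of the paper, reached by a slightly different route. But the proof stops exactly where the proposition's content begins. Your plan for the remaining configuration rests on the assertion that the sequence $\sigma_1,\dots,\sigma_n$ is unimodal; this does not follow from anything you have established, because $\sigma_{k+1}\gtrless\sigma_k$ according to the sign of $y_{c_k}$, and the signs of the interior $y_{c_k}$ are precisely what is uncontrolled at this stage --- they can a priori alternate arbitrarily, so no monotonicity or unimodality of the $\sigma_k$ is available. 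Moreover, the hypotheses $m>1$, or $m=1$ with $a_0,a_n>1$, are never actually brought to bear; you only gesture at where they ``should'' enter. So the argument as written does not close.

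For comparison, the paper avoids reasoning about the individual signs of the interior $y_{c_k}$ altogether. It shows (Lemmas \ref{lem:left} and \ref{lem:right}) that every $y_{c_k}$ is a word with non-negative exponents in the two ``pivots'' $y_0$ and $x_1y_0^{a_0-1}$ read from the left end, and likewise in $y_{c_n}$ and $y_{c_n}^{a_n-1}x_{m-1}$ read from the right end. Feeding the left-hand expressions into the global relation $y_0^{a_0}y_{c_1}^{a_1}\cdots y_{c_n}^{a_n}=1$ forces $x_1y_0^{a_0-1}$, and hence $x_1$, to be positive (Lemma \ref{lem:x_1}); positivity is then transported along the $x$-chain via $x_{m-1}=(x_1x_0^{-1})^{m-2}x_1$ to show the right-hand pivot $y_{c_n}^{a_n-1}x_{m-1}$ is positive --- and this transport is exactly where $m>1$, or $m=1$ with $a_0,a_n>1$, is used. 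Writing $y_0$ as a non-negative word in the two positive right-hand pivots then yields the contradiction. Some device of this kind (working with products of the increments rather than their individual signs) is needed; your sketch would have to be replaced by, not merely supplemented with, such an argument.
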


The proof, occupying the remainder of this section, is established by way of a series of lemmas.

\begin{lemma}\label{lem:WLOG} If $G_\Gamma$ is left-orderable then without loss of generality we may assume that, relative to any given left-order, $y_0 < 1 < y_{c_n}$.\end{lemma}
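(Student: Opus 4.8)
The plan is to reduce the claim to two sign statements about the endpoint generators $y_0$ and $y_{c_n}$ and then exploit the symmetries of a left-order. I will use three standard features of a left-orderable group $G$: it is torsion-free, so that an element equals $1$ in a left-order precisely when it is the identity of $G$; the opposite order $g \prec h \iff h < g$ is again a left-order; and for any automorphism $\phi$ of $G$ the pullback $g \prec h \iff \phi(g) < \phi(h)$ is again a left-order. The second gives the freedom to reverse all signs at once, and the third will let me interchange the roles of $y_0$ and $y_{c_n}$.

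First I would record the relevant symmetry of the family. Reading the cycle in Figure \ref{fig:cycle} in the opposite cyclic direction produces a graph of exactly the same form, with the labels $a_0,\dots,a_n$ and the gaps $b_1,\dots,b_n$ reversed and with $y_0$ and $y_{c_n}$ interchanged. Since Proposition \ref{prp:cycle} is asserted for every graph of this shape, the induced isomorphism lets me pass to the reversed graph whenever convenient, so that whichever of $y_0, y_{c_n}$ I like may be made to play a prescribed role. Combined with order reversal, this shows that once I know $y_0$ and $y_{c_n}$ are nontrivial and lie on opposite sides of $1$, I may normalise to $y_0 < 1 < y_{c_n}$.

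It therefore remains to establish that, in any left-order, $y_0$ and $y_{c_n}$ are nontrivial and of opposite sign. Nontriviality I would extract from the defining relations together with the hypothesis $m>1$ (or $m=1$ with $a_0,a_n>1$), which is exactly what prevents the cycle from degenerating and collapsing these generators to the identity. For the sign, the essential input is the relation $y_0^{a_0}y_{c_1}^{a_1}\cdots y_{c_n}^{a_n}=1$ with every $a_k>0$: since the positive cone is closed under multiplication and under positive powers, the generators $y_{c_0},\dots,y_{c_n}$ cannot all be $\ge 1$ nor all $\le 1$ (the only alternative, that they are all trivial, being excluded as above), so some of them lie on each side of $1$.

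The hard part will be promoting this to a statement about the endpoints rather than some interior $y_{c_k}$. Here I would use the relations $r(x_i)$ and $r(y_j)$ at the unlabelled vertices, each of which rearranges to the form $g_{i+1}=g_i g_{i-1}^{-1} g_i$; this forces the consecutive ratios $g_{i-1}^{-1}g_i$ to be constant along every maximal unlabelled arc, so that each such arc is monotone in the left-order. Tracking how these monotone arcs are joined at the labelled vertices $y_{c_k}$, where the relation $r(y_{c_k})$ alters the ratio by the factor $y_{c_k}^{a_k}$, I would argue that the extreme values among the $y_{c_k}$ occur at the ends of the cycle, namely at $y_0$ and $y_{c_n}$, whence these two must straddle $1$. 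Making this monotonicity bookkeeping precise, and ruling out the borderline same-sign configurations at the endpoints, is the step I expect to require the most care.
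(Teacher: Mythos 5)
Your reduction of the lemma to ``$y_0$ and $y_{c_n}$ are nontrivial and of opposite sign, then normalise by passing to the opposite order (the graph-reversal automorphism is harmless but not needed)'' is the right framing, and your observation that the root relation $y_0^{a_0}y_{c_1}^{a_1}\cdots y_{c_n}^{a_n}=1$ forces \emph{some} $y_{c_k}$ to be positive and \emph{some} to be negative is one of the two inputs the paper also uses. But the heart of the lemma is precisely the step you defer: showing that the extremal generators are the \emph{endpoints} $y_0$ and $y_{c_n}$ rather than interior vertices. As written, your proposal does not prove this; it only announces a ``monotonicity bookkeeping'' plan along unlabelled arcs and concedes that carrying it out, together with ruling out the borderline configurations, ``will require the most care.'' Your nontriviality claim for $y_0$ and $y_{c_n}$ is likewise asserted rather than derived, and your later step quietly leans on it (``the only alternative, that they are all trivial, being excluded as above''). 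So there is a genuine gap: the statement is reduced to two sub-claims, neither of which is established.

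The paper closes this gap with a different and much shorter mechanism, borrowed from Greene. First, a left-orderable $G_\Gamma$ must contain a nontrivial generator that is $\geq$ all generators and one that is $\leq$ all generators (else the group is trivial, and the trivial group is not left-orderable by convention). Second --- and this is the key idea your proposal is missing --- at any vertex \emph{all} of whose incident edges carry the same sign, the relation is a product of terms $(g_j^{-1}g_i)^{\pm 1}$ that are all $\geq 1$ (resp.\ all $\leq 1$) when $g_i$ is extremal, so each factor must equal $1$ and every neighbour $g_j$ is extremal as well. Extremality therefore propagates along the connected graph until it reaches a vertex with incident edges of mixed sign, and in Figure~\ref{fig:cycle} the only such vertices are $y_0$ and $y_{c_n}$. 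Hence the maximum and minimum among the generators may be taken to sit at $y_0$ and $y_{c_n}$; the root relation then forces these two to have opposite signs, and passing to the opposite order gives $y_0<1<y_{c_n}$. Your arc-monotonicity idea (constant ratios $g_i^{-1}g_{i+1}$ along unlabelled arcs, twisted by $y_{c_k}^{a_k}$ at labelled vertices) is essentially the content of Lemmas~\ref{lem:x}--\ref{lem:left}, which the paper deploys \emph{after} this lemma, not to prove it; if you want to complete your route you would still need an argument in the spirit of the sign-propagation step above.
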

\begin{proof} Suppose that $G_\Gamma$ is left-orderable, and fix a left order $<$. As observed by Greene \cite[Proof of Theorem 2.1]{Greene2011}, $G_\Gamma$ must contain at least one generator that is non-trivial and  larger (or equal to) all other generators relative to $<$, since otherwise the group is trivial (recall that the trivial group is not left-orderable by convention). Similarly, $G_\Gamma$ must contain a generator that is non-trivial and smaller (or equal to) all other generators relative to $<$.

Next consider a vertex with associated generator $g_i$ for which every incident edge is labelled $+$. By definition, the associated relation at this vertex is $\prod_j g_j^{-1}g_i$, where $j$ runs over vertices adjacent to the $g_i$ vertex. If we assume that $g_i$ is a largest element among the generators, then $g_j\le g_i$ for any $j$, which implies that $1\le g_j^{-1}g_i$. If $1< g_j^{-1}g_i$ we contradict $\prod_j g_j^{-1}g_i=1$; in particular, any of the $g_j$ must be a largest element also (compare \cite[Proof of Theorem 2.1]{Greene2011}). The same observation holds for least elements among the generators; a similar argument applies for all incident edges labelled $-$ (in both cases). As a result, without loss of generality, we may assume that least and greatest elements among the generators correspond to vertices  with incident edges that are not all labeled with the same sign. 

In the present setting, we have that $y_0$ and $y_{c_n}$ are the candidates for least and greatest elements among the generators. Note that these generators must have opposite sign relative to $<$, otherwise we contradict $y_0^{{a_0}}y_{{c_1}}^{{a_1}} \cdots y_{{c_n}}^{{a_n}}=1$. We conclude that the only possibilities are either $y_0<1<y_{c_k}$ or $y_{c_k}<1<y_0$; by symmetry (i.e. by passing to the opposite order) we may restrict attention to the former, concluding the proof. \end{proof}

\begin{lemma}\label{lem:x} The elements $x_i\in G_\Gamma$ may be rewritten ${x_i} = {({x_1}x_0^{ - 1})^{i - 1}}{x_1}$ for all $0\le i \le m$.
\end{lemma}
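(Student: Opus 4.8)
The plan is to read off a two-term recurrence from the relations $r(x_i)=1$ and then solve it. For $0<i<m$ the relation
\[
r(x_i) = (x_{i+1}^{-1}x_i)^{-1}(x_{i-1}^{-1}x_i)^{-1} = 1
\]
simplifies, after inverting the two parenthesized factors, to $x_i^{-1}x_{i+1}x_i^{-1}x_{i-1}=1$. Rearranging this gives the recurrence $x_{i+1}x_i^{-1} = x_i x_{i-1}^{-1}$ for $0<i<m$.

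First I would observe that this recurrence simply asserts that the consecutive ``ratio'' $x_{i+1}x_i^{-1}$ is independent of $i$; iterating down to $i=1$ shows $x_{i+1}x_i^{-1} = x_1 x_0^{-1}$ for every index in range. Writing $u = x_1 x_0^{-1}$, this is exactly $x_{i+1} = u\,x_i$, so a routine induction starting from $x_1$ yields $x_i = u^{i-1}x_1 = (x_1 x_0^{-1})^{i-1}x_1$ for all $1\le i\le m$. (When $m=1$ there are no intermediate relations and the claim for $1\le i\le m$ is vacuous apart from the trivial identity $x_1=x_1$.)

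It remains to handle $i=0$, which lies outside the range $0<i<m$ in which the recurrence is valid and so must be checked by hand: here the asserted formula reads $(x_1 x_0^{-1})^{-1}x_1 = x_0 x_1^{-1}x_1 = x_0$, as desired. I do not anticipate any genuine obstacle; the entire content lies in correctly inverting the factors of $r(x_i)$ and noticing that the resulting relation forces the ratios $x_{i+1}x_i^{-1}$ to be constant. The only point demanding a little care is the boundary index $i=0$, which the recurrence does not cover.
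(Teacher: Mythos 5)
Your proof is correct and takes essentially the same approach as the paper: both read the recurrence $x_{k+1}=x_kx_{k-1}^{-1}x_k$ off the relations $r(x_i)$ and then induct, your version merely repackaging the induction as the constancy of the ratios $x_{i+1}x_i^{-1}$. The boundary check at $i=0$ and the vacuous case $m=1$ are handled correctly.
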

\begin{proof}The statement clearly holds for $i=0,1$. When $i=2$ consider the relation $x_0^{-1}x_1x_2^{-1}x_1$ so that $x_2= x_1x_0^{-1}x_1$, verifying the case $i=2$. For induction, suppose the result holds for $0\le i\le k$, and consider the relation $x_{k-1}^{-1}x_kx_{k+1}^{-1}x_k$ so that \[{x_{k + 1}} = {x_k}x_{k - 1}^{ - 1}{x_k} = {x_i} = {({x_1}x_0^{ - 1})^{k - 1}}{x_1}{({({x_1}x_0^{ - 1})^{k - 2}}{x_1})^{ - 1}}{({x_1}x_0^{ - 1})^{k - 1}}{x_1} = {({x_1}x_0^{ - 1})^k}{x_1}\] as claimed.\end{proof}

\begin{lemma}\label{lem:y} The elements $y_{c_k}\in G_\Gamma$ may be rewritten
\[{y_{{c_k}}} = {({y_{{c_{k - 1}} + 1}}y_{{c_{k - 1}}}^{ - 1})^{{b_k} - 1}}{y_{{c_{k - 1}} + 1}}\] for all  $0 < k \le n$, or
\[{y_{{c_k}}} = {({y_{{c_{k + 1}} - 1}}y_{{c_{k + 1}}}^{ - 1})^{{b_{k + 1}} - 1}}{y_{{c_{k + 1}} - 1}} = {y_{{c_{k + 1}} - 1}}{(y_{{c_{k + 1}}}^{ - 1}{y_{{c_{k + 1}} - 1}})^{{b_{k + 1}} - 1}}\] for all $0 \le k < n$. More generally, for $c_{k-1}\le i \le c_k$ we have\[{y_i} = {({y_{{c_{k - 1}} + 1}}y_{{c_{k - 1}}}^{ - 1})^{i - {c_{k - 1}} - 1}}{y_{{c_{k - 1}} + 1}}\]
and similarly, for $c_{k-1}\le i \le c_k$ we have\[{y_i} = {({y_{{c_k} - 1}}y_{{c_k}}^{ - 1})^{{c_k} - i - 1}}{y_{{c_k} - 1}} = {y_{{c_k} - 1}}{(y_{{c_k}}^{ - 1}{y_{{c_k} - 1}})^{{c_k} - i - 1}}.\]
\end{lemma}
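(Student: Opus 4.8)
The plan is to reduce the entire statement to the single recursion already solved in Lemma \ref{lem:x}. The key observation is that the relation $r(y_i)$ at a \emph{generic} vertex, meaning one with $i \neq c_k$ for every $k$, reads $(y_{i+1}^{-1}y_i)(y_{i-1}^{-1}y_i) = 1$, which rearranges to $y_{i+1} = y_i y_{i-1}^{-1} y_i$. This is precisely the recursion satisfied by the $x_i$ in Lemma \ref{lem:x} (where the relation $r(x_i)$ yields $x_{i+1} = x_i x_{i-1}^{-1} x_i$ after the analogous rearrangement). Consequently, within each block of consecutive indices $c_{k-1} \le i \le c_k$, every \emph{interior} vertex $c_{k-1} < i < c_k$ is generic, so the recursion holds at each such vertex, while the distinguished endpoints $y_{c_{k-1}}$ and $y_{c_k}$ — whose relations instead involve the powers $y_{c_k}^{a_k}$ — are never invoked in the process.

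First I would fix a block $c_{k-1} \le i \le c_k$ and let $y_{c_{k-1}}$ and $y_{c_{k-1}+1}$ play the roles of $x_0$ and $x_1$. Because $c_k - c_{k-1} = b_k$, the induction carried out in the proof of Lemma \ref{lem:x} applies verbatim and gives $y_i = (y_{c_{k-1}+1}\,y_{c_{k-1}}^{-1})^{\,i-c_{k-1}-1}\,y_{c_{k-1}+1}$ for all $c_{k-1} \le i \le c_k$; setting $i = c_k$ produces the first displayed formula. For the second family of formulas I would invert the recursion: from $y_{i+1} = y_i y_{i-1}^{-1} y_i$ one solves $y_{i-1} = y_i y_{i+1}^{-1} y_i$, which again has exactly the form treated in Lemma \ref{lem:x}. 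Re-running that same induction with $y_{c_k}$ and $y_{c_k-1}$ now in the roles of $x_0$ and $x_1$ yields $y_i = (y_{c_k-1}\,y_{c_k}^{-1})^{\,c_k-i-1}\,y_{c_k-1}$ for $c_{k-1} \le i \le c_k$; setting $i = c_{k-1}$ and reindexing $k \mapsto k+1$ gives the second displayed formula.

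The two right-hand expressions appearing in each of the last displayed lines are then equal by the elementary identity $(AB^{-1})^p A = A(B^{-1}A)^p$, applied with $A = y_{c_k-1}$, $B = y_{c_k}$, and $p = c_k - i - 1$; this identity follows from a one-line induction on $p$. I do not expect any genuine obstacle here, since the entire analytic content is already contained in Lemma \ref{lem:x}. The only real care required is bookkeeping: confirming that each recursive step invokes a relation at a generic (interior) vertex rather than at a distinguished $y_{c_k}$ vertex, and translating the block length $c_k - c_{k-1} = b_k$ correctly into the exponents and index ranges. Keeping the upward anchoring (at $y_{c_{k-1}}$) and the downward anchoring (at $y_{c_k}$) straight is the one place where an indexing slip could occur, so I would verify both against the boundary cases $i = c_{k-1}$ and $i = c_k$ before concluding.
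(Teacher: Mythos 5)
Your proposal is correct and matches the paper's intent exactly: the paper's proof is literally the single line ``Identical to that of Lemma \ref{lem:x},'' and your argument is precisely the elaboration of that remark, observing that the generic relation $r(y_i)$ for $i\ne c_k$ yields the recursion $y_{i+1}=y_iy_{i-1}^{-1}y_i$ so that the induction of Lemma \ref{lem:x} runs verbatim within each block, upward from $(y_{c_{k-1}},y_{c_{k-1}+1})$ and downward from $(y_{c_k},y_{c_k-1})$. Your bookkeeping of the block length $c_k-c_{k-1}=b_k$ and the identity $(AB^{-1})^pA=A(B^{-1}A)^p$ are both correct.
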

\begin{proof} Identical to that of Lemma \ref{lem:x}.\end{proof}

\begin{lemma}\label{lem:left}Every element $y_{c_k}$ may be represented as a word in the group elements $y_0$ and $x_1y_0^{a_0-1}$ for $0\le k \le n$. \end{lemma}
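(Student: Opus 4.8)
The plan is to prove Lemma \ref{lem:left} by induction on $k$, using the relations $r(y_{c_k})$ together with the rewriting formulas already established in Lemmas \ref{lem:x} and \ref{lem:y}. The base case $k=0$ is essentially a definition: $y_0$ is manifestly a word in $y_0$, and I will want to record that the auxiliary element $x_1 y_0^{a_0-1}$ is available to express the ``first neighbour'' $y_1$. Indeed, the relation $r(y_0)=(x_1^{-1}y_0)^{-1}y_0^{a_0}(y_1^{-1}y_0)$ rearranges to express $y_1^{-1}y_0$, and hence $y_1$, as a word in $x_1$ and $y_0$; combining the factor $y_0^{a_0}$ with $x_1$ produces precisely the element $x_1 y_0^{a_0-1}$, which is the reason this particular combination is singled out in the statement.

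The inductive step is where the real work lies. Assuming every $y_{c_j}$ for $j<k$ is a word in $y_0$ and $x_1 y_0^{a_0-1}$, I would use Lemma \ref{lem:y} to express all the intermediate vertices $y_i$ with $c_{k-1}\le i \le c_k$ in terms of the two endpoints $y_{c_{k-1}}$ and $y_{c_{k-1}+1}$ of the chain. The key point is that the relation $r(y_{c_k})=(y_{c_k-1}^{-1}y_{c_k})y_{c_k}^{a_k}(y_{c_k+1}^{-1}y_{c_k})$ lets me propagate across the vertex labelled $a_k$: it relates $y_{c_k+1}$ to $y_{c_k}$ and $y_{c_k-1}$, so once the left portion of the cycle (everything up to $y_{c_k}$) is known to lie in the subgroup generated by $y_0$ and $x_1 y_0^{a_0-1}$, the relation forces the next endpoint $y_{c_k+1}$ into the same subgroup. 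Feeding $y_{c_k+1}$ back into Lemma \ref{lem:y} then reaches $y_{c_{k+1}}$, closing the induction.

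I expect the main obstacle to be bookkeeping rather than conceptual: carefully tracking which of the two rewriting formulas in Lemma \ref{lem:y} (the forward one anchored at $y_{c_{k-1}+1}$ versus the backward one anchored at $y_{c_k-1}$) to apply at each stage, and verifying that the products one obtains genuinely collapse into words in the two designated elements without introducing any new independent generators. The delicate interfaces are the two ends of the cycle, where the relations $r(y_0)$ and $r(y_{c_n})$ have an exceptional form involving $x_1$ and $x_{m-1}$; there one must use Lemma \ref{lem:x} to re-express the $x$'s, and I anticipate the element $x_1 y_0^{a_0-1}$ is precisely the invariant combination that survives this substitution, which is why it (rather than $x_1$ alone) appears in the statement. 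Once the endpoint relations are handled, the interior steps are a routine substitution exactly parallel to the induction in Lemma \ref{lem:x}.
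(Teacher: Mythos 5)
Your proposal is correct and follows essentially the same route as the paper: an induction on $k$ in which the relation $r(y_0)$ produces the element $x_1y_0^{a_0-1}$ (as $y_1y_0^{-1}$) in the base case, Lemma \ref{lem:y} traverses each unlabelled chain, and the relation $r(y_{c_k})$ propagates across each labelled vertex. The paper packages your ``carry the next endpoint along'' idea by formally strengthening the inductive claim to cover both $y_{c_k}$ and the difference $y_{c_k+1}y_{c_k}^{-1}$ (note your literal hypothesis ``every $y_{c_j}$ for $j<k$'' would not by itself reach $y_{c_{k-1}+1}$), but your description already supplies exactly this mechanism.
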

\begin{proof}
We prove a stronger statement: ${y_{{c_k}}}$ and ${y_{{c_k} + 1}}y_{{c_k}}^{ - 1}$ can be represented as a word in the group elements $y_0$ and $x_1y_0^{a_0-1}$ for $0\le k \le n$.

Note that when $k=0$ the first claim holds trivially. Next consider the relation\[r(y_0)=y_1^{-1}y_0(x_1^{-1}y_0)^{-1}y_0^{a_0}=y_1^{-1}x_1y_0^{a_0}\] so that $y_1=x_1y_0^{a_0}$. Therefore \[{y_1}y_0^{ - 1} = {x_1}y_0^{{a_0}}y_0^{ - 1} = {x_1}y_0^{{a_0} - 1}\] and the second claim holds for $k = 0$ as well.

For induction, assume that the conclusion holds for all $0 \le i \le k - 1$. We have that \[{y_{{c_k}}} = {({y_{{c_{k - 1}} + 1}}y_{{c_{k - 1}}}^{ - 1})^{{b_k} - 1}}{y_{{c_{k - 1}} + 1}} = {({y_{{c_{k - 1}} + 1}}y_{{c_{k - 1}}}^{ - 1})^{{b_k}}}{y_{{c_{k - 1}}}}\] from Lemma \ref{lem:y}, so the claim for ${y_{{c_k}}}$ holds. On the other hand, consider the relation \[r(y_{c_k})=y_{c_k+1}^{-1}y_{c_k}y_{c_k-1}^{-1}y_{c_k}y_{c_k}^{a_k},\] hence\[{y_{{c_k} + 1}}y_{{c_k}}^{ - 1} = {y_{{c_k}}}y_{{c_k} - 1}^{ - 1}y_{{c_k}}^{{a_k}} = {y_{{c_{k - 1}} + 1}}y_{{c_{k - 1}}}^{ - 1}y_{{c_k}}^{{a_k}}.\] In combination with the inductive hypothesis, the claim holds for ${y_{{c_k} + 1}}y_{{c_k}}^{ - 1}$.\end{proof}

\begin{lemma}\label{lem:x_1} Relative to any left-ordering of $G_\Gamma$, $x_1$ must be a positive element.\end{lemma}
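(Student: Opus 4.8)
The plan is to argue by contradiction, and the first thing to appreciate is that the $x$-arc relations by themselves do not pin down the sign of $x_1$. Lemma~\ref{lem:x} writes everything in terms of $t := x_1x_0^{-1}$ (with $x_0 = y_0$ and $x_m = y_{c_n}$) and gives $x_m = t^m x_0$, i.e.\ $t^m = y_{c_n}y_0^{-1}$; since $y_{c_n} > 1$ and $y_0^{-1} > 1$ their product is $> 1$, so $t^m > 1$ and hence $t > 1$. But $x_1 = t\,y_0$ is a product of $t > 1$ and $y_0 < 1$, whose sign is a priori undetermined. I would therefore not work on the $x$-arc at all, but instead propagate the \emph{assumed} negativity of $x_1$ all the way around the $y$-arc and contradict $y_{c_n} > 1$.

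Concretely, assume $G_\Gamma$ is left-ordered; by Lemma~\ref{lem:WLOG} I may take $y_0 < 1 < y_{c_n}$, and I suppose for contradiction that $x_1\le 1$. I would first record the two facts about left-orders that drive everything: the positive cone $\{g: g>1\}$ is closed under multiplication, so a product of elements each $\le 1$ is again $\le 1$ (strictly $<1$ if some factor is $<1$), and $g<1$ forces $g^a<1$ for $a\ge 1$. Next I would extract from Lemmas~\ref{lem:y} and \ref{lem:left} the recursion on the marked vertices: writing $r_k := y_{c_{k-1}+1}y_{c_{k-1}}^{-1}$, the within-block computation of Lemma~\ref{lem:y} gives $y_{c_k} = r_k^{b_k}y_{c_{k-1}}$, the relation $r(y_0)$ gives the base case $r_1 = x_1y_0^{a_0-1}$, and the marked relations $r(y_{c_k})$ for $0<k<n$ give $r_{k+1} = r_k\,y_{c_k}^{a_k}$ (exactly the identities appearing in the proof of Lemma~\ref{lem:left}).

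I would then run a single induction on $k$. The base case: from $x_1\le 1$ and $y_0<1$ with $a_0\ge 1$ one gets $r_1 = x_1 y_0^{a_0-1}\le 1$, and then $y_{c_1} = r_1^{b_1}y_0 < 1$. The inductive step: assuming $r_k\le 1$ and $y_{c_k}<1$, the identity $r_{k+1}=r_k\,y_{c_k}^{a_k}$ gives $r_{k+1}\le 1$ (using $a_k\ge 1$), and then $y_{c_{k+1}} = r_{k+1}^{b_{k+1}}y_{c_k} < 1$ (using $b_{k+1}\ge 1$). This yields $y_{c_n} < 1$, contradicting $y_{c_n} > 1$, and so $x_1 > 1$.

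The step I expect to require the most care is the bookkeeping with left- versus two-sided invariance: only left-multiplication respects the order, so each deduction must be arranged as a left-multiplication by the factor already known to be $\le 1$ (for instance $y_{c_k}^{a_k}<1$ gives $r_k\,y_{c_k}^{a_k} < r_k \le 1$, and $y_{c_k}<1$ gives $r_{k+1}^{b_{k+1}}y_{c_k} < r_{k+1}^{b_{k+1}}\le 1$). I would double-check that every recursion is written with the controlled factor on the left before applying these inequalities. It is worth noting that this argument uses only the relation $r(y_0)$, the intermediate marked relations $r(y_{c_k})$ for $0<k<n$, and the within-block structure; it never invokes the exceptional relation $r(y_{c_n})$ or the global relation $y_0^{a_0}\cdots y_{c_n}^{a_n}=1$, and it is already valid when $n=1$, where the base case alone gives $y_{c_1}=y_{c_n}<1$.
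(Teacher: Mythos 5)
Your proof is correct, but it takes a genuinely different route from the paper's. The paper harvests its contradiction from the cycle-closing relation $r(z)$: it quotes Lemma~\ref{lem:left} to rewrite $y_0^{a_0}y_{c_1}^{a_1}\cdots y_{c_n}^{a_n}=1$ as a positive word in $y_0$ and $x_1y_0^{a_0-1}$ equal to the identity, deduces $x_1y_0^{a_0-1}>1$ from $y_0<1$, and then extracts $x_1>1$ by a small case split on $a_0=1$ versus $a_0>1$. You instead assume $x_1\le 1$ and propagate non-positivity of $r_k=y_{c_{k-1}+1}y_{c_{k-1}}^{-1}$ and negativity of $y_{c_k}$ along the chain of marked vertices, contradicting $y_{c_n}>1$; the relations $r(y_{c_n})$ and $r(z)$ are never invoked (though $r(z)$ still enters implicitly through Lemma~\ref{lem:WLOG}, which uses it to force $y_0$ and $y_{c_n}$ to have opposite signs). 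The two arguments rest on exactly the same recursions --- the identities $r_1=x_1y_0^{a_0-1}$, $r_{k+1}=r_ky_{c_k}^{a_k}$ and $y_{c_k}=r_k^{b_k}y_{c_{k-1}}$ established inside the proofs of Lemmas~\ref{lem:y} and~\ref{lem:left} --- so the difference is only in where the contradiction is collected. What your version buys is a cleaner endgame: it avoids both the implicit check that the words $w_k$ of Lemma~\ref{lem:left} are \emph{positive} words in the two generators (which the paper's ``product of negative elements'' step silently requires) and the $a_0$ case split; the cost is that you must unwind Lemma~\ref{lem:left} rather than cite it as a black box. Your bookkeeping with left-invariance (always left-multiplying the inequality by the already-controlled factor) is exactly right, and the argument degenerates gracefully to $n=1$ as you note.
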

\begin{proof}Consider the relation $y_{c_n}^{-a_n}y_{c_{n-1}}^{-a_{n-1}}\cdots y_1^{-a_1}y_0^{-a_0}$, which implies that \[y_0^{a_0}y_1^{a_1}\cdots y_{c_{n-1}}^{a_{n-1}}y_{c_n}^{a_{n}}=1.\]By Lemma \ref{lem:left}, $y_{c_k}^{a_k}$ may be expressed as a word in the elements $y_0$ and $x_1y_0^{a_0-1}$ for $0\le k \le n$, denoted $w_k=w_k(y_0,x_1y_0^{a_0-1})$. In particular, \[w_0w_1\cdots w_{n-1}w_n=1.\] By Lemma \ref{lem:WLOG}, we may assume that $y_0<1$, thus  $x_1y_0^{a_0-1}$ must be a positive element (if not, a product of negative elements is 1, a contradiction). Now  $1<x_1y_0^{a_0-1}$ implies that $1<x_1$ when $a_0=1$, or $x_1^{-1}<y_0^{a_0-1}$ when $a_0>1$. The former implies that $x_1^{-1}$ is negative, hence $x_1$ is positive as claimed. \end{proof}

\begin{lemma}\label{lem:right}Every element $y_{c_k}$ may be represented as a word in the group elements ${y_{{c_n}}}$ and $y_{{c_n}}^{{a_n} - 1}{x_{m - 1}}$ for $0\le k \le n$. \end{lemma}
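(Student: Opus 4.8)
The plan is to mirror the proof of Lemma~\ref{lem:left} under the left--right reflection of the cycle in Figure~\ref{fig:cycle}, which interchanges $y_0 \leftrightarrow y_{c_n}$, $x_1 \leftrightarrow x_{m-1}$, and $a_0 \leftrightarrow a_n$, and which turns the right-handed differences $y_{c_k+1}y_{c_k}^{-1}$ appearing there into left-handed differences $y_{c_k}^{-1}y_{c_k-1}$. Concretely, I would prove the stronger statement that both $y_{c_k}$ \emph{and} the difference $y_{c_k}^{-1}y_{c_k-1}$ can be written as words in $y_{c_n}$ and $y_{c_n}^{a_n-1}x_{m-1}$ for all $0 \le k \le n$, and then proceed by downward induction on $k$ starting from $k=n$.

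For the base case $k=n$ the first claim is trivial, and the second follows from the relation $r(y_{c_n}) = (y_{c_n-1}^{-1}y_{c_n})\,y_{c_n}^{a_n}\,(x_{m-1}^{-1}y_{c_n})^{-1}$: setting this equal to $1$ and cancelling the outer $y_{c_n}^{\pm1}$ yields $y_{c_n-1} = y_{c_n}^{a_n}x_{m-1}$, so that $y_{c_n}^{-1}y_{c_n-1} = y_{c_n}^{a_n-1}x_{m-1}$, which is exactly the second distinguished generator. This is the precise mirror of the identity $y_1 y_0^{-1} = x_1 y_0^{a_0-1}$ used to launch Lemma~\ref{lem:left}.

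For the inductive step, assume the conclusion for $k+1, \ldots, n$. I would first invoke the right-multiplication form of Lemma~\ref{lem:y}, namely $y_{c_k} = y_{c_{k+1}-1}(y_{c_{k+1}}^{-1}y_{c_{k+1}-1})^{b_{k+1}-1}$, and rewrite $y_{c_{k+1}-1} = y_{c_{k+1}}(y_{c_{k+1}}^{-1}y_{c_{k+1}-1})$ to obtain $y_{c_k} = y_{c_{k+1}}(y_{c_{k+1}}^{-1}y_{c_{k+1}-1})^{b_{k+1}}$, which lies in the desired subgroup by the inductive hypothesis. For the difference, the relation $r(y_{c_k})$ gives $y_{c_k}^{-1}y_{c_k-1} = y_{c_k}^{a_k}(y_{c_k+1}^{-1}y_{c_k})$; the mirror of the telescoping identity used in Lemma~\ref{lem:left}, again extracted from Lemma~\ref{lem:y}, identifies $y_{c_k+1}^{-1}y_{c_k}$ with the level-$(k+1)$ difference $y_{c_{k+1}}^{-1}y_{c_{k+1}-1}$, so the whole expression is a word in $y_{c_k}$ and $y_{c_{k+1}}^{-1}y_{c_{k+1}-1}$, both already controlled.

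The computations are entirely routine once the handedness is fixed, so the only real point of care --- rather than a genuine obstacle --- is bookkeeping: one must consistently use the \emph{left} differences $y_{c_k}^{-1}y_{c_k-1}$ together with the \emph{right-multiplication} forms of Lemma~\ref{lem:y}, since the reflection reverses the order of every product relative to the proof of Lemma~\ref{lem:left}. In view of this symmetry, I expect the argument to read simply as ``identical to that of Lemma~\ref{lem:left} after reflecting the cycle,'' in the same spirit as the one-line proof of Lemma~\ref{lem:y}.
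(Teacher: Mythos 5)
Your proposal is correct and is precisely the argument the paper intends: its proof of Lemma~\ref{lem:right} reads in full ``Similar to that of Lemma~\ref{lem:left} and left to the reader,'' and your mirrored induction (with base case $y_{c_n}^{-1}y_{c_n-1}=y_{c_n}^{a_n-1}x_{m-1}$ extracted from $r(y_{c_n})$, and the left-handed differences tracked via the right-multiplication form of Lemma~\ref{lem:y}) is exactly that reflection carried out correctly.
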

\begin{proof}Similar to that of Lemma \ref{lem:left} and left to the reader.\end{proof}

\begin{proof}[Proof of Proposition \ref{prp:cycle}] Suppose that $G_\Gamma$ is left-orderable. By Lemma \ref{lem:WLOG} we may assume, without loss of generality, that $y_0$ is negative and $y_{c_n}$ is positive. There are two cases to consider according to the hypothesis.

{\bf Case 1:} $m > 1$

Write $y_{{c_n}}^{{a_n} - 1}{x_{m - 1}} = y_{{c_n}}^{{a_n} - 1}{({x_1}y_0^{ - 1})^{m - 2}}{x_1}$ using Lemma \ref{lem:x}. Recall that  ${x_1} $ is positive by Lemma \ref{lem:x_1} and $y_{c_n}$ and $y_0^{-1}$ are positive by assumption. Therefore $y_{{c_n}}^{{a_n} - 1}{x_{m - 1}}$ is positive, as a product of positive elements. As a result, using Lemma \ref{lem:right} we can express $y_0$ as a product of positive elements, a contradiction.

{\bf Case 2:} $m = 1$ and ${a_0},{a_n} > 1$

In this case $y_{c_n} = x_1$ and $x_{m - 1} = y_0$. By Lemma \ref{lem:left}, we must  have ${x_1}y_0^{{a_0} - 1}>1$ (otherwise $x_1$, a positive element, may be written as a product of negative elements). Therefore $x_1y_0\ge{x_1}y_0^{{a_0} - 1}>1$ since $y_0$ is negative.  Now $y_{{c_n}}^{{a_n} - 1}{x_{m - 1}} = x_1^{{a_n} - 1}{y_0} = x_1^{a_n-2}(x_1y_0) > 1$ and we have a contradiction, in view of Lemma \ref{lem:right},  as before.\end{proof}

\section{Completing the proof of Theorem \ref{thm:main}} As observed in Section \ref{sec:Baldwin} it remains to consider braids of type (1)  when $d=\pm1$.

\begin{proposition}\label{prp:type-1-rest}If $Y$ is the two-fold branched cover of a braid of type (1) with $d=\pm1$ then $\pi_1(Y)$ is not left-orderable.\end{proposition}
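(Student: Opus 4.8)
The plan is to reduce Proposition~\ref{prp:type-1-rest} to an application of Proposition~\ref{prp:cycle} by exhibiting, for each braid of type (1) with $d=\pm 1$, a diagram of the branch set whose reduced white graph $\widetilde{\Gamma}$ has the single-cycle form of Figure~\ref{fig:cycle}, and whose parameters satisfy the numerical hypothesis ``$m>1$ or ($m=1$ and $a_0,a_n>1$)''. Everything hinges on translating the braid word $h^{\pm1}\si_1\si_2^{-a_1}\cdots\si_1\si_2^{-a_n}$ into a checkerboard-coloured diagram and reading off its white graph, so the first step is to fix such a diagram for the closure and carry out Greene's recipe (Section~\ref{sec:Greene}) explicitly on it.

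First I would take the standard braid diagram for the closure and choose the checkerboard colouring so that the white regions produce exactly the cyclic structure drawn in Figure~\ref{fig:cycle}: a cycle of negatively-signed edges whose distinguished vertices carry the labels $a_0,\dots,a_n$ (coming from the $\si_2^{-a_i}$ blocks, i.e.\ from multiple edges joining a vertex to the root), with the intervening runs of $0$-labelled vertices accounting for the $\si_1$ factors. The effect of the full twist $h=(\si_2\si_1)^3$ with $d=\pm1$ should be absorbed into the data at the two ends of the cycle, contributing the chain $x_1,\dots,x_{m-1}$ on one side. Concretely, I expect $d=+1$ versus $d=-1$ to change the length $m$ of this chain and possibly the end-labels $a_0,a_n$, and the bulk of the verification is a careful bookkeeping that the relations read off from $\widetilde{\Gamma}$ match precisely the system $r(x_i),r(y_0),r(y_{c_k}),r(y_{c_n}),r(y_j)$ together with $y_0^{a_0}\cdots y_{c_n}^{a_n}=1$ displayed before Proposition~\ref{prp:cycle}.

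Having identified the graph, the remaining step is to check that the numerical hypothesis of Proposition~\ref{prp:cycle} always holds for these braids. This is where I would need to use the standing constraints from family~(1), namely $a_i\ge 0$ with $a_j\ne 0$ for some $j$, and argue that the full twist forces either $m>1$ or, in the degenerate case $m=1$, that both extreme labels exceed $1$. I anticipate this is genuinely case-dependent: some small or degenerate braid words may a priori give $m=1$ with an end-label equal to $1$, and these would fall outside Proposition~\ref{prp:cycle}. I would handle such exceptional words separately, either by recognizing their branched covers as already covered (e.g.\ Seifert fibred or lens spaces, via Propositions~\ref{prp:types-2-and-3} and~\ref{prp:type-1-d=0}, or as torus-knot branch sets), or by a small change of diagram that restores the hypothesis.

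The main obstacle will be the faithful passage from the braid combinatorics to the white graph and the honest accounting for the full twist $h^{\pm1}$: getting the signs on all edges, the multiplicities encoding the $a_i$, and the chain length $m$ exactly right, and in particular pinning down the boundary cases where $m=1$ or an $a_i\in\{0,1\}$. Once the graph is correctly matched to Figure~\ref{fig:cycle} with the hypothesis verified, the conclusion that $\pi_1(Y)=G_\Gamma$ is not left-orderable is immediate from Proposition~\ref{prp:cycle}, which together with Propositions~\ref{prp:types-2-and-3} and~\ref{prp:type-1-d=0} completes the proof of Theorem~\ref{thm:main}.
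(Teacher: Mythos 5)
Your overall strategy coincides with the paper's: reduce to Proposition~\ref{prp:cycle} by exhibiting a diagram of the branch set whose white graph has the cyclic form of Figure~\ref{fig:cycle}, verify the numerical hypothesis there, and dispose of a small list of exceptional words separately as torus links or connected sums of torus links (whose branched covers have torsion in $\pi_1$). You have also correctly guessed the shape of the dichotomy, namely that the full twist is responsible for producing either $m>1$ or $m=1$ with $a_0,a_n>1$.

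The gap is that the step which actually delivers this dichotomy is deferred rather than carried out. The paper's proof rests on an explicit normalization of the braid word by conjugation, using $h=(\si_2\si_1)^3=\si_2\si_1^2\si_2\si_1^2$. For $d=1$ one first conjugates so that the word has the form $h\si_1^k\si_2^{-a_1'}\cdots\si_1\si_2^{-a_n'}$ with $k,a_1',a_n'>0$, then absorbs the twist to reach $\si_2^m\si_1^{a_0}\si_2^{-b_1}\si_1^{a_1}\cdots\si_2^{-b_n}\si_1^{a_n}$ with $m=k+2>2$; this is what guarantees the alternative $m>1$. For $d=-1$ the inverse twist instead lowers the outer $\si_2$-exponents by $2$, and after mirroring and exchanging $\si_1\leftrightarrow\si_2$ one lands in the same normal form with $m=1$ but $a_0,a_n>1$, which is exactly the second alternative. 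Without this computation you cannot exclude the genuinely bad configuration $m=1$ with an end-label equal to $1$, and your proposed fallback of ``a small change of diagram that restores the hypothesis'' is not something you can rely on; nor do Propositions~\ref{prp:types-2-and-3} and~\ref{prp:type-1-d=0} cover any of these words, since they concern types (2), (3) and type (1) with $d=0$ only. The exceptional cases are also not vague ``small words'' but precisely the $n=1$ words, which the normalization identifies concretely (for $d=1$: a torus link, a connected sum of two torus links, or a pretzel knot according to the size of the $\si_2$-exponent; for $d=-1$: a two-bridge torus link). So the plan is the right one, but as written the proposal omits the braid-word bookkeeping that constitutes the substance of the proof.
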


\begin{proof} We proceed considering the two cases separately; each case reduces to an application of  Proposition \ref{prp:cycle}.

\begin{figure}[ht!]
\begin{center}
\labellist
\pinlabel $\bu$ at 185 30
\small
\pinlabel $r$ at 194 24
\pinlabel $\underbrace{\phantom{aaaaaaaaa}}$ at 120 56
\pinlabel $a_0$ at 120 38
\pinlabel $\underbrace{\phantom{aaaaaa}}$ at 240 56
\pinlabel $a_1$ at 240 38
\pinlabel $\underbrace{\phantom{aaaaaaa}}$ at 343 56
\pinlabel $a_3$ at 343 38

\pinlabel $\overbrace{\phantom{aaa}}$ at 183 156
\pinlabel $b_1$ at 183 176
\pinlabel $\overbrace{\phantom{aaaaaa}}$ at 292 156
\pinlabel $b_2$ at 292 176

\pinlabel $\overbrace{\phantom{aaaaaaaaaa}}$ at 206 290
\pinlabel $m$ at 206 307
\endlabellist
\includegraphics[scale=0.5]{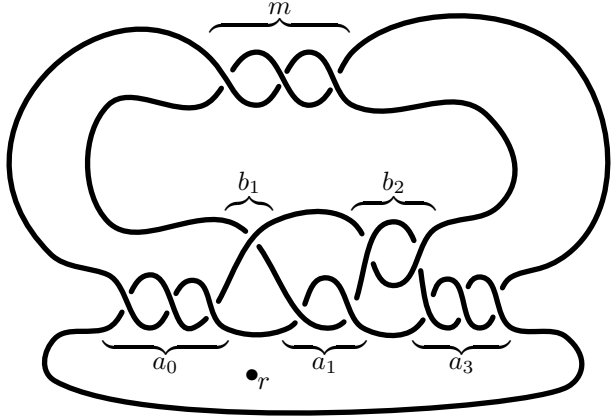}
\caption{An example of a branch set formed from the closure of a  braid of the type used in the proof of Proposition \ref{prp:type-1-rest}, with $m,n=3$. The single vertex $r$ indicates the white region corresponding to the root vertex of $\Gamma$, removed to form $\widetilde{\Gamma}$. }\label{fig:example}
\end{center}
\end{figure}

{\bf Case 1:} $d=1$

Revisiting the form of type (1) braids we have \[h\si_1\si_2^{-a_1}\cdots\si_1\si_2^{-a_n}\] where $a_i\ge0$ and $a_j\ne0$ for at least one value $j$. Up to conjugation, this braid is equivalent to \[h\si_1^k\si_2^{-a_1'}\cdots\si_1\si_2^{-a_n'}\] where now $k,a_1',a_n'>0$, and $a_j'\ge 0$ for $0<j<n$. 

Notice that in this alternate expression the case $n=1$ can arise, in which case the braid in question takes the form $h\si_1^k\si_2^{-a}$ where $k,a>1$. Now recalling that $d=(\si_2\si_1)^3= \si_2\si_1^2\si_2\si_1^2$ we have 
\[h\si_1^k\si_2^{-a} = \si_2\si_1^2\si_2\si_1^m\si_2^{-a}=
\begin{cases}
\si_1^{m+2}\si_2& a=1\\
\si_1^2\si_2^m & a=2 \\
\si_2^m\si_1\si_2^{-a-2}\si_1 & a>2 \\
\end{cases}\] up to conjugation, where $k+2=m>2$. The cases $a=1$ and $a=2$ give rise to the branch sets $T(2,m+2)$ and $T(2,2)\#T(2,a+2)$, respectively, so the fundamental groups of the corresponding two-fold branched covers contain torsion and cannot be left-orderable. On the other hand, the closure of the braid $\si_2^m\si_1\si_2^{-a-2}\si_1$ gives a diagram with white graph satisfying the hypothesis of Proposition \ref{prp:cycle} (where the unbounded region is shaded black in the checkerboard colouring). We remark that, since this particular branch set is a pretzel knot, the two-fold branched cover is a Seifert fibred L-space. As a result, the desired conclusion may also be obtained from \cite[Theorem 4]{BGW2011}.

Now assume that $n>1$. Up to conjugation,  we have the braid 
\begin{align*}
& h\si_1^k\si_2^{-a_1'}\cdots\si_1\si_2^{-a_n'} \\
&= (\si_2\si_1^2\si_2\si_1^2)\si_1^k\si_2^{-a_1'}\cdots\si_1\si_2^{-a_n'} \\
&= \si_2\si_1^2\si_2 \si_1^m\si_2^{-a_1'}\cdots\si_1\si_2^{-a_n'} \\
&= \si_2^m\si_1\si_2^{1-a_1'}\cdots\si_1\si_2^{1-a_n'}\si_1
\end{align*} where $k+2=m>2$. Renaming constants, this braid may be expressed as \[\si_2^m\si_1^{a_0}\si_2^{-b_1}\si_1^{a_1}\cdots\si_2^{-b_n}\si_1^{a_n}\] where $a_i> 0$ for $0\le i \le n$, $b_i>0$ for  $1\le i \le n$ and $m>2$. An example is given in Figure \ref{fig:example}; the associated white graph (shading the unbounded region black) satisfies the hypothesis of Proposition \ref{prp:cycle}.

{\bf Case 2:} $d=-1$

As above, up to conjugation, consider the braids
\begin{align*}
&h^{-1}\si_1\si_2^{-a_1}\cdots\si_1\si_2^{-a_n}\\
&= \si_2^{-1}\si_1^{-1}\si_2^{-1}\si_1^{-1}\si_2^{-1}\si_1^{-1}\si_1\si_2^{-a_1}\cdots\si_1\si_2^{-a_n}\\
&=\si_2^{-1}\si_1^{-1}\si_2^{-1}\si_1^{-1}\si_2^{-a_1-1}\cdots\si_1\si_2^{-a_n}\\
&=\si_1^{-1}\si_2^{-1}\si_1^{-1}\si_2^{-a_1-1}\cdots\si_1\si_2^{-a_n-1}\\
&=\si_2^{-1}\si_1^{-1}\si_2^{-1}\si_2^{-a_1-1}\cdots\si_1\si_2^{-a_n-1}\\
&=\si_1^{-1}\si_2^{-a_1-2}\cdots\si_1\si_2^{-a_n-2}
\end{align*}
where $a_i\ge0$ and $a_j\ne0$ for at least one value $j$. Notice that if $n=1$ then $a_1\ne0$ and the braid in question is $\si_1^{-1}\si_2^{-a_1-4}$ so that the relevant branch set is $T(2,a_1)$. Otherwise, $n>1$ and renaming constants as before we have the braid \[\si_1^{-1}\si_2^{-a_0}\si_1^{b_1}\si_2^{-a_1}\cdots\si_1^{b_n}\si_2^{-a_n}\]  where $a_i> 0$ for $0\le i \le n$ and $b_i>0$ for  $1\le i \le n$. Notice in particular that $a_1,a_n>1$. Now up to exchanging $\si_1 \leftrightarrow \si_2$ and taking mirrors (the former is cosmetic; the latter results in an orientation reversing homeomorphism in the two-fold branched cover) this braid is equivalent to 
\[\si_2\si_1^{a_0}\si_2^{-b_1}\si_1^{a_1}\cdots\si_2^{-b_n}\si_1^{a_n}\] as in the case $d=1$ (this time with $m=1$ and $a_0,a_n>1$), so that the associated white graph satisfies the hypothesis of Proposition \ref{prp:cycle}. \end{proof}

Now combining Proposition \ref{prp:types-2-and-3}, Proposition \ref{prp:type-1-d=0} and Proposition \ref{prp:type-1-rest} with Theorem \ref{thm:Baldwin} proves Theorem \ref{thm:main}.

\bibliographystyle{plain}
\bibliography{OBD.bib}

\end{document}